\newtheorem{theorem}{Theorem}
\newtheorem{lemma}[theorem]{Lemma}
\newtheorem{remark}[theorem]{Remark}
\newtheorem*{theorem*}{Theorem}
\def\XXint#1#2#3{{\setbox0=\hbox{$#1{#2#3}{\int}$ }
\vcenter{\hbox{$#2#3$ }}\kern-.6\wd0}}
\definecolor{Yellow}{rgb}{0.95,0.9,0.0} 
\definecolor{Red}{rgb}{0.8,0.1,0.1}
\definecolor{Green}{rgb}{0.1,0.65,0.2}
\definecolor{Blue}{rgb}{0.1,0.1,0.8}
\definecolor{Purple}{rgb}{0.7,0.1,0.7}
\definecolor{Grey}{rgb}{0.6,0.6,0.6}
\newcommand{\supp}{\operatorname{supp}}
\newcommand{\dist}{\operatorname{dist}}
\newcommand{\sigdist}{\operatorname{dist}^{\pm}} 
\newcommand{\R}{\mathbb{R}}
\newcommand{\Rd}{\mathbb{R}^d}
\newcommand{\domain}{{\mathbb{R}^d}} 
\newcommand{\dx}{\,\mathrm{d}x}
\newcommand{\dy}{\,\mathrm{d}y}
\newcommand{\dS}{\,\mathrm{d}S}
\newcommand{\dt}{\,\mathrm{d}t}
\newcommand{\ds}{\,\mathrm{d}s}
\newcommand{\eps}{\varepsilon}
\renewcommand{\vec}[1]{{\operatorname{#1}}}
\newcommand{\sdist}{{\operatorname{dist}^\pm}}
\begin{document}

\title[Sharp-interface limit for the Allen-Cahn equation]{Convergence rates of the Allen-Cahn equation to mean curvature flow: A short proof based on relative entropies}

\author{Julian Fischer}
\author{Tim Laux}
\author{Theresa M.\ Simon}

\begin{abstract}
We give a short and self-contained proof for rates of convergence of the Allen-Cahn equation towards mean curvature flow, assuming that a classical (smooth) solution to the latter exists and starting from well-prepared initial data. Our approach is based on a relative entropy technique. In particular, it does not require a stability analysis for the linearized Allen-Cahn operator. As our analysis also does not rely on the comparison principle, we expect it to be applicable to more complex equations and systems.\\
    
\medskip

\noindent \textbf{Keywords:} Mean curvature flow, Allen-Cahn equation, relative entropy method, diffuse interface, reaction-diffusion equations

\medskip

\noindent \textbf{MSC2020:}  53E10, 35A15, 35K57, 53C38, 35B25
\end{abstract}

\maketitle

\section{Introduction}

The Allen-Cahn equation
\begin{align}
\label{AllenCahn}
\frac{d}{dt} u_\eps = \Delta u_\eps - \frac{1}{\eps^2} W'(u_\eps)
\end{align}
-- with a suitable double-well potential $W$ like for instance $W(s)=c\,(1-s^2)^2$, $c>0$ -- is the most natural diffuse-interface approximation for (two-phase) mean curvature flow:
It is well-known that in the limit of vanishing interface width $\varepsilon\rightarrow 0$, the solutions $u_\eps$ to the Allen-Cahn equation \eqref{AllenCahn} converge to a characteristic function $\chi:\Rd\times [0,T] \rightarrow \{-1,1\}$ whose interface evolves by motion by mean curvature. For a proof of this fact in the framework of Brakke solutions to mean curvature flow, we refer to \cite{IlmanenConvergenceOfAllenCahn}, while for the convergence towards the viscosity solution of the level-set formulation under the assumption of non-fattening we refer to \cite{EvansSonerSouganidis}.
Provided that the total energy converges in the limit $\varepsilon\rightarrow 0$, one may prove that the limit is a distributional solution \cite{LauxSimon}.
For a general compactness statement using the gradient-flow structure of \eqref{AllenCahn} and the identification of the limit in the radially symmetric case, we refer the reader to \cite{BronsardKohn}.
Under the assumption of the existence of a smooth limiting evolution, rates of convergence may be derived based on a strategy of matched asymptotic expansions and the stability of the linearized Allen-Cahn operator \cite{Chen,DeMottoniSchatzman}.

The Allen-Cahn equation corresponds to the $L^2$ gradient flow of the Ginzburg-Landau energy functional
\begin{align}
\label{Energy}
E_\eps[v]:=\int_\domain \frac{\eps}{2} |\nabla v|^2 + \frac{1}{\eps} W(v) \dx.
\end{align}
Solutions to the Allen-Cahn equation \eqref{AllenCahn} satisfy the energy dissipation estimate
\begin{align}
\label{EnergyDissipation}
\frac{d}{dt} \int_\domain \frac{\eps}{2} |\nabla u_\eps|^2 + \frac{1}{\eps} W(u_\eps) \dx =
-\int_\domain \frac{1}{\eps} \bigg|\eps \Delta u_\eps - \frac{1}{\eps} W'(u_\eps)\bigg|^2 \dx.
\end{align}
In the present work, we pursue a strategy of deriving a quantitative convergence result in the sharp-interface limit $\varepsilon \rightarrow 0$ based purely on the energy dissipation structure.
In particular, we give a short proof for the following quantitative convergence of solutions of the Allen-Cahn equation towards a smooth solution of mean curvature flow.
\begin{theorem}
\label{Theorem1}
Let $d\in \mathbb{N}$. Let $I(t)\subset \mathbb{R}^d$, $t\in [0,T]$, be a compact interface $I(t)=\partial\Omega(t)$ evolving smoothly by mean curvature, and let $\chi:\smash{\mathbb{R}^d} \times [0,T]\rightarrow \{-1,1\}$ be the corresponding phase indicator function
\begin{align*}
\chi(x,t):=
\begin{cases}
1&\text{if }x\in \Omega(t),
\\
-1&\text{if }x\notin \Omega(t).
\end{cases}
\end{align*}
Let $W$ be a standard double-well potential as described below and denote by $\theta$ the corresponding one-dimensional interface profile.
Let $u_\eps$ be the solution to the Allen-Cahn equation \eqref{AllenCahn} with initial data given by $u_\eps(x,0)=\theta(\eps^{-1}\sdist(x,I(0)))$, where $\theta$ is the equilibrium profile defined in \eqref{optimalprofile} and $\sdist(x,I(0))$ is the signed distance function to $I(0)$ with the convention $\sdist(x,I(0))>0$ for $x\in \Omega(0)$. Define $\psi_\eps(x,t):=\int_0^{u_\eps(x,t)} \sqrt{2W(s)} \ds$. Then the error estimate
\begin{align}\label{error_estimate}
\sup_{t\in [0,T]} ||\psi_\eps(\cdot,t)-\chi(\cdot,t)||_{L^1(\mathbb{R}^d)} \leq C\big(d,T,(I(t))_{t\in [0,T]}\big) \, \varepsilon
\end{align}
holds.
\end{theorem}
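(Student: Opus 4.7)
The plan is to construct a relative-entropy functional that measures the deviation of $u_\eps$ from the sharp interface and to propagate an $O(\eps^2)$ bound on it from the initial data to all times in $[0,T]$ via a Gronwall argument.

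First I would introduce a calibration vector field extending the outward unit normal $\nu$ of $I(t)$ into a tubular neighborhood. Concretely, set
\[
\xi(x,t) := \eta\bigl(\sdist(x,I(t))\bigr)\,\nabla\sdist(x,I(t)),
\]
with $\eta$ a smooth cutoff, $\eta(0)=1$, $|\eta|\le 1$, supported well inside the reach of $(I(t))_{t\in[0,T]}$. The relative entropy is then
\[
\mathcal{E}[u_\eps|\chi](t) := E_\eps[u_\eps](t) - \int_{\Rd} \xi(\cdot,t)\cdot\nabla\psi_\eps(\cdot,t)\dx.
\]
The Modica--Mortola identity $\tfrac{\eps}{2}|\nabla u_\eps|^2 + \tfrac{1}{\eps}W(u_\eps) = |\nabla \psi_\eps| + \tfrac{1}{2\eps}\bigl(\eps|\nabla u_\eps|-\sqrt{2W(u_\eps)}\bigr)^2$ and $|\xi|\le 1$ give the coercivity
\[
\mathcal{E} \ge \int \Bigl(1 - \xi\cdot\tfrac{\nabla\psi_\eps}{|\nabla\psi_\eps|}\Bigr)|\nabla\psi_\eps|\dx + \int \tfrac{1}{2\eps}\bigl(\eps|\nabla u_\eps|-\sqrt{2W(u_\eps)}\bigr)^2\dx,
\]
which controls both the misalignment of the diffuse level sets with $I(t)$ (localising $|\nabla\psi_\eps|$ to an $O(\eps)$-layer around $I(t)$) and the equipartition defect. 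On the well-prepared initial data $u_\eps(\cdot,0) = \theta(\sdist/\eps)$, signed-distance coordinates together with $\theta' = \sqrt{2W(\theta)}$ yield $\mathcal{E}(0) = O(\eps^2)$, with losses only from the extrinsic Jacobian $1 + O(\sdist)$ and exponentially small cutoff tails.

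The analytic core is the Gronwall estimate $\tfrac{d}{dt}\mathcal{E} \le C(I,T)\,\mathcal{E}$. Using \eqref{EnergyDissipation} and integrating by parts,
\[
\tfrac{d}{dt}\mathcal{E} = -\int \eps(\partial_t u_\eps)^2\dx - \int (\partial_t\xi)\cdot\nabla\psi_\eps\dx + \int (\divv\xi)\sqrt{2W(u_\eps)}\,\partial_t u_\eps\dx.
\]
The key algebraic step is to compare $\eps\partial_t u_\eps$ with the ``expected velocity'' $H^\ast\sqrt{2W(u_\eps)}$ predicted by matched asymptotics (where $H^\ast := -\divv\xi$ extends the mean curvature of $I(t)$). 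Setting $R_\eps := \eps\partial_t u_\eps + H^\ast\sqrt{2W(u_\eps)}$, one rewrites the first and third bulk terms as a coercive quadratic $-\int R_\eps^2/\eps\dx$ plus lower-order remainders. For the middle term one invokes the transport identities satisfied by the signed distance to a mean-curvature-flowing interface (schematically, $(\partial_t + H^\ast\xi\cdot\nabla)\xi = O(\sdist)$), so that $\int (\partial_t\xi)\cdot\nabla\psi_\eps$ becomes a quadratic form in the same normal-alignment and equipartition quantities controlled by $\mathcal{E}$ (with $O(\sdist)|\nabla\psi_\eps|$ prefactors absorbed by the localisation provided by the first coercive summand of $\mathcal{E}$). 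Collecting terms and applying Young's inequality closes the Gronwall estimate, giving $\mathcal{E}(t) \le C(I,T)\eps^2$ on $[0,T]$.

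Finally I would extract the $L^1$ estimate in two pieces. On the tubular neighborhood $\{|\sdist(\cdot,I(t))| \le \delta_0\}$, the coercivity of $\mathcal{E}$ confines $|\nabla\psi_\eps|$ to an $O(\eps)$-thick layer around $I(t)$; since $\chi$ is locally constant on each side and $|\psi_\eps - \chi|\le 2$, a slicing argument in signed-distance coordinates gives a tubular contribution of order $\eps\,\mathcal{H}^{d-1}(I(t)) = O(\eps)$. In the bulk $\{|\sdist|>\delta_0\}$ the pointwise bound $|\psi_\eps - \chi| \le C\sqrt{W(u_\eps)}$ (from $W(\pm 1)=0$ being nondegenerate) together with $\int W(u_\eps)/\eps\dx \le C$ and the exponential smallness of $u_\eps \mp 1$ away from $I$ yield the remaining $O(\eps)$. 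The main obstacle is the algebraic completion of squares for $\tfrac{d}{dt}\mathcal{E}$: it requires choosing $\xi$ precisely so that the signed-distance transport identities interact correctly with the Allen--Cahn dissipation, with every emerging quadratic remainder paired against one of the two coercive summands of $\mathcal{E}$ -- this is exactly where the relative-entropy structure replaces the spectral analysis of the linearised Allen--Cahn operator.
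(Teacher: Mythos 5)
Your Steps 1--5 (the calibration field $\xi$, the relative entropy $\mathcal{E}$, the Modica--Mortola coercivity, the completion-of-squares argument for $\tfrac{d}{dt}\mathcal{E}$, and the $O(\eps^2)$ initial-data estimate) are all essentially the same as the paper's argument, and they are correct in outline.

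Step 6, however, has a genuine gap: the bound $\mathcal{E}(t)\le C\eps^2$ at a fixed time does \emph{not} imply $\|\psi_\eps(\cdot,t)-\chi(\cdot,t)\|_{L^1}\le C\eps$. Indeed the constant function $u\equiv -1$ has $\psi\equiv -1$, zero Ginzburg--Landau energy, and zero calibration term, hence $\mathcal{E}[u\,|\,I]=0$; yet $\|\psi-\chi\|_{L^1}=2|\Omega(t)|=O(1)$. The coercivity of $\mathcal{E}$ only controls $\int \min\{\operatorname{dist}^2,1\}\,|\nabla\psi_\eps|\,\mathrm dx$ and the equipartition defect; it does not rule out the whole transition layer being absent or sitting at the wrong sign on one side of $I(t)$. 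For the same reason, the claimed bulk estimate fails: the inequality $|\psi_\eps-\chi|\le C\sqrt{W(u_\eps)}$ is false when $u_\eps$ has the wrong sign (there $W(u_\eps)\approx 0$ while $|\psi_\eps-\chi|\approx 2$), and ``exponential smallness of $u_\eps\mp 1$ away from $I$'' is a property that requires a comparison-principle or barrier argument, not something encoded in $\mathcal{E}$.

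The paper closes exactly this gap with a second, separate Gronwall argument for a weighted signed-volume functional. Concretely, it takes a bounded odd truncation $\tau$, fixes $s_0>0$ small, and tracks
\begin{align*}
\frac{d}{dt}\int_{\Rd} (\chi-\psi_\eps)\,\tau\!\Big(\tfrac{1}{s_0}\sdist(x,I)\Big)\,\mathrm dx .
\end{align*}
Using $\partial_t\psi_\eps=-\eps^{-1}\sqrt{2W(u_\eps)}\,\vec H_\eps\cdot\vec n_\eps$, the transport identity $\partial_t\sdist=-\vec H_I\cdot\nabla\sdist$ near $I$, an integration by parts, and Cauchy--Schwarz, the time derivative is bounded by the time-integrated dissipation term $\int\eps^{-1}|\vec H_\eps-\vec H_I\,\eps|\nabla u_\eps||^2$, the weighted energy $\int\min\{\operatorname{dist}^2,1\}\,\tfrac1\eps W(u_\eps)$, and the weighted $L^1$ quantity itself. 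Since the initial value vanishes (because $\psi_\eps(\cdot,0)$ transitions exactly at $I(0)$), Gronwall gives $\int|\psi_\eps-\chi|\min\{\operatorname{dist},1\}\,\mathrm dx\le C\eps^2$ uniformly in time; a Fubini-type interpolation against $|\psi_\eps-\chi|\le 2$ in signed-distance coordinates then upgrades this weighted bound to the unweighted $\|\psi_\eps-\chi\|_{L^1}\le C\eps$. Note that this second argument uses the dissipation terms produced by the first Gronwall estimate, not merely the endpoint bound $\mathcal{E}(t)\le C\eps^2$ --- this is the ingredient your Step 6 is missing.
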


\begin{remark}
	Our arguments also show that the estimate \eqref{error_estimate} holds for a larger class of solutions to the Allen-Cahn equation \eqref{AllenCahn}: 
	We only require solutions $u_\eps$ whose initial data satisfies $u_\eps(\cdot, 0) \in [-1,1]$ and whose initial relative entropy, defined below in \eqref{DefinitionRelativeEntropy}, is comparable to that of the optimal transition profile of Theorem \ref{Theorem1}, i.e., we have $E[u_\eps(\cdot,0)|I(0)] \leq C \eps^2$.
\end{remark}

We note that this error estimate is of optimal order, as $\varepsilon$ is the typical width of the diffuse interface in the Allen-Cahn approximation (i.\,e.\ the typical width of the region in which the function $\psi_\eps$ takes values in the range $[-1+\delta,1-\delta]$ for any fixed $\delta>0$).

The assumptions required for the double-well potential $W$ are standard: We require $W$ to satisfy $W(1)=W(-1)=0$ and $W(s)\geq c\min\{|s-1|^2,|s+1|^2\}$; furthermore, we require $W$ to be twice continuously differentiable, symmetric around the origin, and subject to the normalization $\smash{\int_{-1}^1 \sqrt{2W(s)} \ds}=2$. The simplest example is the normalized standard double-well potential $W(s):= \frac98 (1-s^2)^2 $. Under these assumptions, we may define the one-dimensional equilibrium profile $\theta:\mathbb{R}\rightarrow \mathbb{R}$ to be the unique odd solution of the ODE
\begin{align}\label{optimalprofile}
	\begin{cases}
		\theta'(s)&=\sqrt{2W(\theta(s))},\\
		\theta(\pm\infty)& =\pm 1;
	\end{cases}
\end{align} 
the profile $\theta$ then approaches its boundary values $\pm 1$ at $\pm \infty$ with an exponential rate, see \cite{Sternberg}.

As our quantitative convergence analysis does not rely on the comparison principle, it may be applicable to more complex models, such as systems of Navier-Stokes-Allen-Cahn type \cite{AbelsLiu}; note that a weak-strong uniqueness theorem for the two-fluid free boundary problem for the Navier-Stokes equation (i.\,e.\ the corresponding sharp-interface model) has already been obtained in \cite{FischerHensel}.
We note that a relative entropy concept related to the one in \cite{FischerHensel} had already been employed by Jerrard and Smets \cite{JerrardSmets} to deduce weak-strong uniqueness of solutions to binormal curvature flow.
In the forthcoming work \cite{FischerHenselLauxSimon}, we employ an energy-based strategy to deduce a weak-strong uniqueness theorem for multiphase mean curvature flow.

\section{Definition of the Relative Entropy and Gronwall Estimate}

\subsection{Extending the unit normal vector field of the surface evolving by mean curvature}

Let $I=I(t)$ be a surface that evolves smoothly by motion by mean curvature. Let $P_{I(t)} : \R^d \to I(t)$ be the nearest point projection to $I(t)$ and fix $r_c>0$ small enough depending on $(I(t))_{t\in [0,T]}$ so that for all $t\in [0,T]$ the map $P_{I(t)}$ is smooth in a tubular neighborhood of $I(t)$ of width $r_c$; for example one may take the minimum over $t\in[0,T]$ of the normal injectivity radius of $I_{t}$. 
For each $t\in [0,T]$, we extend the inner unit normal $\vec{n}_I$ of the surface $I(t)$ to a vector field on $\domain$ by defining
\begin{align}
\label{DefXi}
	\xi(x) := \eta(\sigdist(x,I)) \vec{n}_I( P_I(x)),
\end{align}
where $\eta$ is a cutoff for all $s\in \R$ satisfying $\eta(s)\geq 0$ and
\begin{subequations}
\begin{align}
	\eta(0) & = 1,\qquad\qquad\qquad\qquad\qquad
	&\eta(s) = 0  ~ \text{ for }  |s| \geq \frac{r_c}{2},\label{cutoffrange}\\
	\eta(s)  & \leq \max\{1 - c r_c^{-2} s^2,0\},\label{Length}\\
	|\eta'(s)| & \leq C \min\{r_c^{-1},r_c^{-2} |s|\}.
\end{align}
\end{subequations}
Furthermore, we will consider a standard cut-off $\tilde \eta$ satisfying $\tilde \eta(s) = 1$ for $|s| \leq \frac{r_c}{4}$, $\tilde \eta(s) = 0$ for $|s| \leq \frac{r_2}{2}$ and $|\eta'(s)| \leq C r_c^{-1}$, in which case one may take $\eta(s) := (1- c r_c^{-2} s^2) \tilde \eta(s)$.

The extended unit normal vector field $\xi$ and mean curvature vector $\vec{H}_I(x):=\vec{H}_I(P_I x) \tilde \eta (\dist(x,I))$ then satisfy the PDEs
\begin{subequations}
\begin{align}
\label{EquationXiTransport}
\frac{d}{dt} \xi &= -(\vec{H}_I \cdot \nabla)\xi -(\nabla \vec{H}_I)^T \xi
 + O(\dist(x,I)),
\\
\label{EquationXiLength}
\frac{d}{dt} |\xi|^2 &= -(\vec{H}_I \cdot \nabla)|\xi|^2
 + O(\dist^2(x,I)),
\end{align}
and
\begin{align}
\label{EquationXiCurvature}
-\nabla \cdot \xi = \vec{H}_I \cdot \xi + O(\dist(x,I)),
\end{align}
where the constant implicit in the $O$-notation depends on the interface $I$.
Furthermore, we have the estimate
\begin{align}
|\nabla \xi| + |\vec{H}_I| + |\nabla \vec{H}_I| \leq C(I(t)).
\label{EquationXiCurvature2}
\end{align}
\end{subequations}
To see that \eqref{EquationXiTransport} and \eqref{EquationXiLength} hold, one makes use of the formulas $\vec{n}_I(x)=\nabla \sdist(x,I)$ and $\partial_t \sdist(x,I)=-\vec{H}_I \cdot \vec{n}_I(P_Ix)$ valid in a neighborhood of $I(t)$. Formula \eqref{EquationXiCurvature} is an immediate consequence of the equality $\vec{H}_I=-(\nabla \cdot \vec{n}_I)\vec{n}_I$ valid on the interface $I(t)$ and the Lipschitz continuity of both sides of the equation.

\subsection{The relative entropy inequality}

Our argument is based on a relative entropy method. As the Modica-Mortola trick will play an important role in the definition of the relative entropy, we introduce the function
\begin{align}
\label{DefPsiEps}
\psi_\eps(x,t) := \int_0^{u_\eps(x,t)} \sqrt{2W(s)} \ds.
\end{align}
Given a smooth solution $u_\eps$ to the Allen-Cahn equation \eqref{AllenCahn} and a surface $I(t)$ which evolves smoothly by mean curvature flow, we define the relative entropy $E[u_\eps|I]$ as
\begin{align}
\label{DefinitionRelativeEntropy}
E[u_\eps|I] := \int_\domain \frac{\eps}{2}|\nabla u_\eps|^2 + \frac{1}{\eps} W(u_\eps) - \xi \cdot \nabla \psi_\eps \dx,
\end{align}
where for historic reasons we use the term ``relative entropy'' as opposed to the maybe more accurate term ``relative energy''.
Introducing the short-hand notation
\begin{subequations}
\begin{align}
\label{def:n_eps}
\vec{n}_\eps := \frac{\nabla u_\eps}{|\nabla u_\eps|}
\end{align}
(with $\vec{n}_\eps(x,t)\in\mathbb{S}^{d-1}$ arbitrary but fixed in case $|\nabla u_\eps|=0$) and writing
\begin{align*}
E[u_\eps|I] = \int_\domain \frac{\eps}{2}|\nabla u_\eps|^2 + \frac{1}{\eps} W(u_\eps) - |\nabla \psi_\eps| \dx
+\int_\domain (1-\xi\cdot \vec{n}_\eps)|\nabla \psi_\eps| \dx,
\end{align*}
we see that the relative entropy consists of two contributions: The first term
\begin{align*}
\int_\domain \frac{\eps}{2}|\nabla u_\eps|^2 + \frac{1}{\eps} W(u_\eps) - |\nabla \psi_\eps| \dx = \int_\domain \frac{1}{2}\Big|\sqrt{\eps} |\nabla u_\eps|-\frac{1}{\sqrt{\eps}}\sqrt{2W(u_\eps)}\Big|^2\dx
\end{align*}
controls the local lack of equipartition of energy between the terms $\frac{\eps}{2} |\nabla u_\eps|^2$ and $\frac{1}{\eps}W(u_\eps)$, while the second term 
\begin{align*}
	\int_\domain (1-\xi\cdot \vec{n}_\eps)|\nabla \psi_\eps| \dx\geq \frac{1}{2} \int_\domain |\vec{n}_\eps-\xi|^2 |\nabla \psi_\eps| \dx
\end{align*}
controls the local deviation of the normals $\vec{n}_\eps$ and $\vec{n}_I$. Note that the latter term also controls the distance to the interface $I(t)$ (since $|\xi|\leq \max\{1-c r_c^{-2}\dist^2(x,I),0\}$).

We furthermore introduce the notation
\begin{align}\label{def:H_eps}
\vec{H}_\eps := -\bigg(\eps \Delta u_\eps - \frac{1}{\eps} W'(u_\eps)\bigg) \frac{\nabla u_\eps}{|\nabla u_\eps|},
\end{align}
motivated by the fact that $\vec{H}_\eps$ will play the role of a curvature vector.
\end{subequations}

The key step in our analysis is the following Gronwall-type estimate for the relative entropy.
\begin{theorem}
\label{MainTheorem}
Let $I(t)$, $t\in [0,T]$, be an interface evolving smoothly by mean curvature.
Let $u_\eps$ be a solution to the Allen-Cahn equation \eqref{AllenCahn} with initial data given by $u_\eps(x,0)=\theta(\eps^{-1}\sdist(x,I(0)))$. Then for any $t\in [0,T]$ the estimate
\begin{align*}
\frac{d}{dt} E[u_\eps|I]
&+\int_\domain \frac{1}{4\eps} \big|\vec{H}_\eps - \vec{H}_I  \, \varepsilon |\nabla u_\eps|\big|^2 + \frac{1}{4\eps} \big|\vec{n}_\eps \cdot \vec{H}_\eps  - (-\nabla \cdot \xi) \sqrt{2W(u_\eps)} \big|^2 \dx
\\&
\leq C(d,(I(s))_{s\in[0,t]}) E[u_\eps|I]
\end{align*}
holds.
\end{theorem}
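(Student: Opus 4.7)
The strategy is a direct Gronwall computation: differentiate $E[u_\eps|I]$ in time, substitute the Allen-Cahn equation for $\partial_t u_\eps$ and the transport identity \eqref{EquationXiTransport} for $\partial_t\xi$, and complete squares so that the two dissipation terms in the statement appear as negative squares on the right-hand side.

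First I would compute $\frac{d}{dt}E[u_\eps|I]$ directly. Writing $h_\eps := -(\eps\Delta u_\eps - \eps^{-1}W'(u_\eps))$ so that $\vec{H}_\eps = h_\eps\vec{n}_\eps$, $|\vec{H}_\eps|^2 = h_\eps^2$, and $\partial_t u_\eps = -\eps^{-1}h_\eps$, the energy-dissipation identity \eqref{EnergyDissipation} contributes $-\int \eps^{-1} h_\eps^2 \dx$. For the $\xi\cdot\nabla\psi_\eps$ piece I distribute the time derivative, integrate by parts in space, and use $\partial_t\psi_\eps = \sqrt{2W(u_\eps)}\partial_t u_\eps = -\eps^{-1}\sqrt{2W(u_\eps)} h_\eps$. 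This produces the cross term $-\int \eps^{-1}(\nabla\cdot\xi)\sqrt{2W(u_\eps)}h_\eps \dx$ together with the transport contribution $-\int\partial_t\xi\cdot\nabla\psi_\eps \dx$, which by \eqref{EquationXiTransport} expands into integrals involving $\vec{H}_I$, $\nabla\vec{H}_I$, and $\xi$, modulo an $O(\dist(x,I))|\nabla\psi_\eps|$ error.

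Next I would complete squares. A direct expansion yields $B + C = \frac{h_\eps^2}{2\eps} - \frac{h_\eps|\nabla u_\eps|\,\vec{n}_\eps\cdot\vec{H}_I}{2} + \frac{\eps|\nabla u_\eps|^2|\vec{H}_I|^2}{4} + \frac{h_\eps(\nabla\cdot\xi)\sqrt{2W(u_\eps)}}{2\eps} + \frac{(\nabla\cdot\xi)^2\cdot 2W(u_\eps)}{4\eps}$, where $B, C$ denote the two target squares. Subtracting this from the raw dissipation $-\eps^{-1}h_\eps^2 - \eps^{-1}h_\eps(\nabla\cdot\xi)\sqrt{2W(u_\eps)}$ leaves a quadratic in $h_\eps$ that can be completed to a further non-positive square, at the cost of a residual depending only on $\vec{H}_I$, $\xi$, $\nabla u_\eps$, and $W(u_\eps)$.

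The hard part is closing the Gronwall loop by bounding this residual, together with the transport contribution $-\int\partial_t\xi\cdot\nabla\psi_\eps \dx$ and the $O(\dist(x,I))$ error, by $C(I)\,E[u_\eps|I]$. Three tools are essential. First, the Modica-Mortola identity $\frac{\eps}{2}|\nabla u_\eps|^2 + \eps^{-1}W(u_\eps) = |\nabla\psi_\eps| + \frac{1}{2}\bigl|\sqrt{\eps}|\nabla u_\eps|-\eps^{-1/2}\sqrt{2W(u_\eps)}\bigr|^2$ trades weighted-energy residuals (e.g.\ $\int\eps|\nabla u_\eps|^2|\vec{H}_I|^2 \dx$) for weighted-$|\nabla\psi_\eps|$ ones plus the equipartition piece of $E$. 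Second, the splitting $|\nabla\psi_\eps| = \xi\cdot\nabla\psi_\eps + (1-\xi\cdot\vec{n}_\eps)|\nabla\psi_\eps|$ isolates the normal-deviation piece of $E$; what remains of the bulk $\xi\cdot\nabla\psi_\eps$ contribution, after an integration by parts, should cancel against the advected-$\xi$ transport terms modulo curvature errors picked up through \eqref{EquationXiCurvature}. Third, the quadratic pinching $|\xi| \leq \max\{1-cr_c^{-2}\dist^2(x,I),0\}$ gives $\dist^2(x,I)|\nabla\psi_\eps| \leq C(1-|\xi|)|\nabla\psi_\eps| \leq C(1-\xi\cdot\vec{n}_\eps)|\nabla\psi_\eps|$, converting distance errors into the normal-deviation piece of $E$. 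I expect these cancellations between transport contributions and bulk residuals to be the most delicate step, but the identities \eqref{EquationXiTransport}--\eqref{EquationXiCurvature2} for the extended normal $\xi$ are tailored precisely to make it work.
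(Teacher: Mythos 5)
Your high-level strategy is the paper's: differentiate $E[u_\eps|I]$, substitute the Allen--Cahn equation and the transport identity \eqref{EquationXiTransport} for $\partial_t\xi$, complete squares to produce the two dissipation terms, and absorb the remaining residual by the coercivity of $E[u_\eps|I]$ (your three ``tools'' are precisely \eqref{control_a}--\eqref{control_d}). However, there is a genuine gap in the square-completion step that you identify as ``the most delicate'' but do not resolve.

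The problem is the source of the cross term $\vec{H}_\eps\cdot\vec{H}_I\,|\nabla u_\eps|$ in the square $B=\frac{1}{4\eps}|\vec{H}_\eps-\vec{H}_I\,\eps|\nabla u_\eps||^2$. In your plan you produce $B+C$ by completing the square against only the raw dissipation $-\eps^{-1}h_\eps^2 - \eps^{-1}h_\eps(\nabla\cdot\xi)\sqrt{2W(u_\eps)}$, with the $\vec{H}_I$-cross term borrowed from the further, additional square completion. If you carry this out, the resulting residual (before even touching the transport integral $-\int\partial_t\xi\cdot\nabla\psi_\eps\dx$) is, modulo lower-order terms, of the form $c\,(\nabla\cdot\xi)^2\,|\nabla\psi_\eps|$ near the interface. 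This is an $O(1)$ quantity, not $O(E[u_\eps|I])$, so the Gronwall loop does not close at that point. The paper avoids this by not generating $B$ from the raw dissipation alone: the cross term $\int\vec{H}_\eps\cdot\vec{H}_I\,|\nabla u_\eps|\dx$ is first manufactured from the transport contribution by the specific integration-by-parts identity \eqref{Auxiliary} (based on $\sum_i\partial_i\partial_i u_\eps\,\partial_j u_\eps = \partial_i(\partial_i u_\eps\,\partial_j u_\eps)-\tfrac12\partial_j|\nabla u_\eps|^2$), applied after the symmetry of $\nabla^2\psi_\eps$ and the tensor identity $\xi\otimes\vec{n}_\eps+\vec{n}_\eps\otimes\xi=-(\vec{n}_\eps-\xi)\otimes(\vec{n}_\eps-\xi)+\vec{n}_\eps\otimes\vec{n}_\eps+\xi\otimes\xi$ have been used to isolate $\nabla\vec{H}_I:\vec{n}_\eps\otimes\vec{n}_\eps|\nabla\psi_\eps|$. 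Only then does the residual, now grouped as $\int |\vec{H}_I|^2\tfrac{\eps}{2}|\nabla u_\eps|^2 + |\nabla\cdot\xi|^2\tfrac1\eps W(u_\eps) + \vec{H}_I\cdot\vec{n}_\eps(\nabla\cdot\xi)|\nabla\psi_\eps|\dx$, factor as a perfect square $\tfrac12|\sqrt\eps|\nabla u_\eps|\vec{H}_I+\tfrac1{\sqrt\eps}(\nabla\cdot\xi)\sqrt{2W(u_\eps)}\vec{n}_\eps|^2$, which is controllable via $\vec{H}_I+(\nabla\cdot\xi)\xi=O(\dist(x,I))$ from \eqref{EquationXiCurvature}. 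You gesture at ``an integration by parts cancelling the advected-$\xi$ transport terms,'' but you do not identify this specific identity, and without it your computation leaves an uncontrolled $O(1)$ term. The rest of your sketch (the initial time-derivative computation, the coercivity estimates, the use of Modica--Mortola, the use of $1-|\xi|\geq c\,\dist^2$) is correct and matches the paper.
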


\subsection{Coercivity properties of the relative entropy functional}
For the proof of the Gronwall-type inequality of Theorem~\ref{MainTheorem}, we shall need the following coercivity properties of the relative entropy.
\begin{lemma}\label{lemma:control_from_entropy}
	We have the estimates
	\begin{subequations}
	\begin{align}
	\label{control_a}
		\int_\domain  \left(\sqrt{\eps} |\nabla u_\eps | - \frac{1}{\sqrt{\eps}}\sqrt{2W(u_\eps)} \right)^2 \dx & \leq  2 E[u_\eps | I],\\
	\label{control_b}
		\int_\domain  | \vec{n}_\eps- \xi |^2 |\nabla \psi_\eps| \dx & \leq 2 E[u_\eps | I],\\
	\label{control_c}
		\int_\domain | \vec{n}_\eps - \xi |^2 \eps |\nabla u_\eps|^2 \dx & \leq 12 E[u_\eps | I],
		\\
	\label{control_d}
		\int_\domain \min\{\dist^2(x,I),1\} \bigg(\frac{\eps}{2} |\nabla u_\eps|^2 + \frac{1}{\eps}W(u_\eps)\bigg)  \dx \leq & C(I) E[u_\eps|I].
	\end{align}
	\end{subequations}
\end{lemma}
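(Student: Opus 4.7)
The starting point is the decomposition already displayed in the text just above the lemma, namely
\begin{align*}
E[u_\eps|I] = \int_\domain \frac{1}{2}\Big(\sqrt{\eps}|\nabla u_\eps|-\tfrac{1}{\sqrt{\eps}}\sqrt{2W(u_\eps)}\Big)^2 \dx + \int_\domain (1-\xi\cdot \vec{n}_\eps)|\nabla \psi_\eps| \dx,
\end{align*}
in which both summands are non-negative since $|\xi|\le 1$. Estimate \eqref{control_a} is then immediate: drop the second term and multiply by $2$. For \eqref{control_b} I would use the elementary inequality
\begin{align*}
|\vec{n}_\eps - \xi|^2 = 1 - 2\xi\cdot \vec{n}_\eps + |\xi|^2 \leq 2(1-\xi\cdot\vec{n}_\eps),
\end{align*}
which is valid because $|\xi|\le 1$, and then integrate against $|\nabla \psi_\eps|$ and use the decomposition above.

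For \eqref{control_c} I would decompose $\sqrt{\eps}|\nabla u_\eps|$ as
\begin{align*}
\sqrt{\eps}|\nabla u_\eps| = \Big(\sqrt{\eps}|\nabla u_\eps| - \tfrac{1}{\sqrt{\eps}}\sqrt{2W(u_\eps)}\Big) + \tfrac{1}{\sqrt{\eps}}\sqrt{2W(u_\eps)},
\end{align*}
square and use $(a+b)^2\le 2a^2+2b^2$ to obtain the pointwise bound
\begin{align*}
\eps|\nabla u_\eps|^2 \leq 2\Big(\sqrt{\eps}|\nabla u_\eps| - \tfrac{1}{\sqrt{\eps}}\sqrt{2W(u_\eps)}\Big)^2 + 2|\nabla u_\eps|\sqrt{2W(u_\eps)},
\end{align*}
where the last factor is $2|\nabla \psi_\eps|$. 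Multiplying by $|\vec{n}_\eps-\xi|^2$, using the trivial bound $|\vec{n}_\eps-\xi|^2\le 4$ in the first summand, and integrating, I combine \eqref{control_a} and \eqref{control_b} to land at $8E + 4E = 12E$.

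The last estimate \eqref{control_d} uses the quadratic decay of $|\xi|$ away from $I$ encoded in \eqref{Length}. That hypothesis yields $1-|\xi(x)| \geq c\,\min\{r_c^{-2}\dist^2(x,I),1\}$ and hence, after absorbing $r_c$ into the constant, $\min\{\dist^2(x,I),1\} \leq C(I)\,(1-|\xi(x)|) \leq C(I)\,(1-\xi\cdot \vec{n}_\eps)$. I then split
\begin{align*}
\frac{\eps}{2}|\nabla u_\eps|^2 + \tfrac{1}{\eps}W(u_\eps) = \tfrac{1}{2}\Big(\sqrt{\eps}|\nabla u_\eps|-\tfrac{1}{\sqrt{\eps}}\sqrt{2W(u_\eps)}\Big)^2 + |\nabla \psi_\eps|,
\end{align*}
multiply by $\min\{\dist^2,1\}\le 1$, and bound the two resulting integrals via \eqref{control_a} and by the chain of inequalities above, respectively.

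There is no genuine obstacle here; the only subtlety is the bookkeeping in \eqref{control_d}, where one must take care that the comparison $\min\{\dist^2(x,I),1\} \lesssim 1-|\xi(x)|$ holds uniformly on all of $\R^d$ (both inside and outside $\supp \xi$), which is ensured by the cutoff property \eqref{cutoffrange} that forces $\xi$ to vanish well before $\dist(x,I)$ becomes large.
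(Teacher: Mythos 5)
Your proofs of \eqref{control_a}, \eqref{control_b}, and \eqref{control_d} coincide with the paper's and are correct. For \eqref{control_c}, however, the stated derivation of the pointwise bound does not hold: setting $A=\sqrt{\eps}|\nabla u_\eps|$ and $B=\frac{1}{\sqrt{\eps}}\sqrt{2W(u_\eps)}$, the elementary inequality $(a+b)^2\le 2a^2+2b^2$ applied to $A=(A-B)+B$ yields
\begin{align*}
\eps|\nabla u_\eps|^2 = A^2 \le 2(A-B)^2 + 2B^2 = 2\Big(\sqrt{\eps}|\nabla u_\eps|-\tfrac{1}{\sqrt{\eps}}\sqrt{2W(u_\eps)}\Big)^2 + \frac{4W(u_\eps)}{\eps},
\end{align*}
where the last term is $2B^2$, \emph{not} $2AB=2|\nabla\psi_\eps|$ as you wrote. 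The term $\frac{4W(u_\eps)}{\eps}$ is not by itself controlled by the relative entropy, so the argument as stated does not close. Coincidentally, the bound you wrote, $A^2\le 2(A-B)^2+2AB$, \emph{is} true (it is equivalent to $0\le (A-B)^2+B^2$), but this needs a different justification than the one you gave. Moreover, even granting that bound, after multiplying by $|\vec{n}_\eps-\xi|^2\le 4$ the first summand contributes $2\cdot 4\cdot\int(A-B)^2\le 16\,E[u_\eps|I]$ (via \eqref{control_a}) and the second contributes $2\cdot 2\,E[u_\eps|I]=4\,E[u_\eps|I]$ (via \eqref{control_b}), for a total of $20\,E[u_\eps|I]$, not $12$ — your ``$8E+4E$'' dropped the factor of $2$ in front of the squared term. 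The paper instead starts from the exact identity
\begin{align*}
\eps|\nabla u_\eps|^2 = |\nabla\psi_\eps| + \sqrt{\eps}|\nabla u_\eps|\Big(\sqrt{\eps}|\nabla u_\eps|-\tfrac{1}{\sqrt{\eps}}\sqrt{2W(u_\eps)}\Big),
\end{align*}
applies Young's inequality with equal weights to the product, and \emph{absorbs} the resulting $\frac12\eps|\nabla u_\eps|^2$ into the left-hand side, which yields the sharper pointwise inequality $\eps|\nabla u_\eps|^2 \le 2|\nabla\psi_\eps| + (A-B)^2$ and hence the constant $12$. You should either adopt this absorption argument or else verify the inequality $A^2\le 2(A-B)^2+2AB$ directly and accept a larger constant in \eqref{control_c} (which is harmless for the rest of the paper, since only the qualitative form of the lemma is used downstream).
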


\begin{proof}
We complete the square to get
\begin{align*}
  \begin{split}	
	E[u_\eps| I] 
	& = \int_\domain \frac{1}{2} \left(\sqrt{\eps} |\nabla u_\eps | - \frac{1}{\sqrt{\eps}}\sqrt{2W(u_\eps)} \right)^2 +(1- \xi \cdot \vec{n}_\eps ) |\nabla \psi_\eps| \dx.
  \end{split}
\end{align*}
In particular, we directly obtain \eqref{control_a} and \eqref{control_b} by $|\xi|\leq 1$.
By the property \eqref{Length} of the cutoff $\eta$ (and hence $1-\xi\cdot\vec{n}_\eps \geq \min\{c r_c^{-2}\dist^2(x,I),1\}$), we deduce \eqref{control_d} with $|\nabla \psi_\eps|$ instead of the energy density, which we may replace upon using \eqref{control_a}.

Employing Young's inequality in the form of
\begin{align}\nonumber
	\eps |\nabla u_\eps |^2 &= |\nabla \psi_\eps | + \sqrt{\eps} |\nabla u_\eps | \left(\sqrt{\eps} |\nabla u_\eps | - \frac{1}{\sqrt{\eps}} \sqrt{2W(u_\eps)} \right)
	\\&
	\label{replace_nabla_u_by_nabla_psi}
	\leq |\nabla \psi_\eps| + \frac{1}{2} \eps |\nabla u_\eps|^2 + \frac{1}{2} \bigg(\sqrt{\eps} |\nabla u_\eps | - \frac{1}{\sqrt{\eps}} \sqrt{2W(u_\eps)}\bigg)^2,
\end{align}
absorption and $|\vec{n}_\eps - \xi |\leq 2$ yield
\begin{align*}
&\int_\domain |\vec{n}_\eps - \xi |^2 \eps |\nabla u_\eps|^2 \, dx
\\&
\leq 2 \int_\domain |\vec{n}_\eps - \xi |^2 |\nabla \psi_\eps| \dx
+ 4 \int_\domain \bigg(\sqrt{\eps} |\nabla u_\eps | - \frac{1}{\sqrt{\eps}} \sqrt{2W(u_\eps)}\bigg)^2 \dx.
\end{align*}
By \eqref{control_a} and \eqref{control_b}, this shows \eqref{control_c}.
\end{proof}

\subsection{Time evolution of the relative entropy functional}
The main step in the proof of Theorem~\ref{MainTheorem} is the derivation of the following formula; by estimating the right-hand side using the abovementioned coercivity properties and equations \eqref{EquationXiTransport}--\eqref{EquationXiCurvature}, we will derive the Gronwall-type inequality of Theorem~\ref{MainTheorem}.
\begin{lemma}
Let $u_\eps$ be a solution to the Allen-Cahn equation \eqref{AllenCahn} and let $I=I(t)$ be a smooth solution to mean curvature flow. Let $\xi$ be as defined in \eqref{DefXi}. The time evolution of the relative entropy is then given by
\begin{align}
\nonumber
\frac{d}{dt} E[u_\eps|I]
&=
-\int_\domain \frac{1}{2\eps} \big|\vec{H}_\eps - \vec{H}_I  \, \varepsilon |\nabla u_\eps|\big|^2 + \frac{1}{2\eps} \big|\vec{n}_\eps \cdot \vec{H}_\eps  - (-\nabla \cdot \xi) \sqrt{2W(u_\eps)} \big|^2 \dx
\\&~~~~
\nonumber
+\int_\domain |\vec{H}_I|^2 \frac{\eps}{2} |\nabla u_\eps|^2 + |\nabla \cdot \xi|^2 \frac{1}{\varepsilon} W(u_\eps) + \vec{H_I}\cdot \vec{n}_\eps (\nabla \cdot \xi) |\nabla \psi_\eps| \dx
\\&~~~~
\nonumber
+\int_\domain \nabla \cdot \vec{H}_I \bigg(\frac{\eps}{2}|\nabla u_\eps|^2
+ \frac{1}{\eps} W(u_\eps) - |\nabla \psi_\eps| \bigg) \dx
\\&~~~~
\nonumber
- \int_\domain \nabla \vec{H}_I : \vec{n}_\eps\otimes \vec{n}_\eps (\eps |\nabla u_\eps|^2 - |\nabla \psi_\eps|) \dx
\\&~~~~
\label{time_derivative_entropy}
-\int_\domain \nabla \vec{H}_I : (\vec{n}_\eps-\xi) \otimes (\vec{n}_\eps -\xi) |\nabla \psi_\eps| \dx
\\&~~~~
\nonumber
+\int_\domain \nabla \cdot \vec{H}_I (1-\xi\cdot \vec{n}_\eps) |\nabla \psi_\eps| \dx
\\&~~~~
\nonumber
-\int_\domain |\nabla \psi_\eps| (\vec{n}_\eps -\xi) \cdot \bigg(\frac{d}{dt} \xi +  (\vec{H}_I \cdot \nabla) \xi+(\nabla \vec{H}_I)^T \xi\bigg) \dx
\\&~~~~
\nonumber
-\int_\domain |\nabla \psi_\eps| \xi \cdot \bigg(\frac{d}{dt} \xi +  (\vec{H}_I \cdot \nabla) \xi \bigg) \dx.
\end{align}
\end{lemma}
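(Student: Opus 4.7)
The plan is to compute $\frac{d}{dt}E[u_\eps|I]$ directly, using only the Allen--Cahn equation \eqref{AllenCahn}, the energy dissipation identity \eqref{EnergyDissipation}, and integration by parts; at no point in this lemma will I invoke the transport equations \eqref{EquationXiTransport}--\eqref{EquationXiCurvature}, so that the resulting identity holds exactly. The Ginzburg--Landau part of $E[u_\eps|I]$ produces $-\int\eps^{-1}|\vec{H}_\eps|^2\dx$ via \eqref{EnergyDissipation}, where I use the identity $|\vec{H}_\eps|^2=(\vec{n}_\eps\cdot\vec{H}_\eps)^2$ from \eqref{def:H_eps}. For the calibration correction $-\int\xi\cdot\nabla\psi_\eps\dx$ I differentiate under the integral, transfer the spatial derivative from $\partial_t\psi_\eps$ onto $\xi$ by integration by parts, and substitute the chain rule $\partial_t\psi_\eps=\sqrt{2W(u_\eps)}\,\partial_t u_\eps$ together with \eqref{AllenCahn} to write $\partial_t\psi_\eps=-\eps^{-1}\sqrt{2W(u_\eps)}\,\vec{n}_\eps\cdot\vec{H}_\eps$. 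This yields the starting identity
\begin{align*}
\tfrac{d}{dt}E[u_\eps|I]=-\!\int\tfrac{1}{\eps}|\vec{H}_\eps|^2\dx-\!\int\partial_t\xi\cdot\nabla\psi_\eps\dx-\tfrac{1}{\eps}\!\int(\nabla\cdot\xi)\sqrt{2W(u_\eps)}\,\vec{n}_\eps\cdot\vec{H}_\eps\dx.
\end{align*}

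I now complete squares in the two combinations $\vec{H}_\eps-\eps|\nabla u_\eps|\vec{H}_I$ and $\vec{n}_\eps\cdot\vec{H}_\eps+\sqrt{2W(u_\eps)}\,(\nabla\cdot\xi)$. Using $|\vec{H}_\eps|^2=(\vec{n}_\eps\cdot\vec{H}_\eps)^2$, these two squares together absorb exactly $\tfrac{1}{\eps}|\vec{H}_\eps|^2+\tfrac{1}{\eps}\sqrt{2W(u_\eps)}(\nabla\cdot\xi)(\vec{n}_\eps\cdot\vec{H}_\eps)$, leaving the two quadratic residues $\tfrac{\eps}{2}|\vec{H}_I|^2|\nabla u_\eps|^2$ and $\tfrac{1}{\eps}(\nabla\cdot\xi)^2W(u_\eps)$ (the first two summands of Line~2) together with a single cross term $-\int\vec{H}_\eps\cdot\vec{H}_I\,|\nabla u_\eps|\dx$. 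Since $\vec{H}_\eps|\nabla u_\eps|=-(\eps\Delta u_\eps-\eps^{-1}W'(u_\eps))\nabla u_\eps$, I process this cross term by two integrations by parts: one using $\partial_k u_\eps\,\partial_k\partial_i u_\eps=\partial_i(\tfrac12|\nabla u_\eps|^2)$ for the Laplacian piece, and one using $W'(u_\eps)\nabla u_\eps=\nabla W(u_\eps)$ for the potential piece. All derivatives transfer onto $\vec{H}_I$, giving
\begin{align*}
-\!\int\vec{H}_\eps\cdot\vec{H}_I|\nabla u_\eps|\dx=-\!\int\nabla\vec{H}_I:\vec{n}_\eps\otimes\vec{n}_\eps\,\eps|\nabla u_\eps|^2\dx+\!\int(\nabla\cdot\vec{H}_I)\Big[\tfrac{\eps}{2}|\nabla u_\eps|^2+\tfrac{1}{\eps}W(u_\eps)\Big]\dx.
\end{align*}

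Finally I insert the elementary equipartition splits $\eps|\nabla u_\eps|^2=|\nabla\psi_\eps|+(\eps|\nabla u_\eps|^2-|\nabla\psi_\eps|)$ and $\tfrac{\eps}{2}|\nabla u_\eps|^2+\tfrac{1}{\eps}W(u_\eps)=|\nabla\psi_\eps|+\tfrac12\big(\sqrt{\eps}|\nabla u_\eps|-\eps^{-1/2}\sqrt{2W(u_\eps)}\big)^2$. The defect pieces reproduce Lines~3 and~4 verbatim, while the $|\nabla\psi_\eps|$-weighted remainders must be recombined with $-\int\partial_t\xi\cdot\nabla\psi_\eps\dx$. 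I do this by writing $\nabla\psi_\eps=|\nabla\psi_\eps|\vec{n}_\eps$, decomposing $\vec{n}_\eps=(\vec{n}_\eps-\xi)+\xi$, and adding and subtracting the structural combinations $(\vec{H}_I\cdot\nabla)\xi+(\nabla\vec{H}_I)^T\xi$ and $(\vec{H}_I\cdot\nabla)\xi$ that will later, through \eqref{EquationXiTransport}--\eqref{EquationXiLength}, produce the smallness required in Theorem~\ref{MainTheorem}. The polarization identity $(\vec{n}_\eps-\xi)\otimes(\vec{n}_\eps-\xi)=\vec{n}_\eps\otimes\vec{n}_\eps-\vec{n}_\eps\otimes\xi-\xi\otimes\vec{n}_\eps+\xi\otimes\xi$ generates Line~5, and a single final integration by parts swapping one divergence between $\vec{H}_I$ and $\xi$ delivers the cross term $\vec{H}_I\cdot\vec{n}_\eps(\nabla\cdot\xi)|\nabla\psi_\eps|$ of Line~2, closing the identity.

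The principal obstacle is purely algebraic: a great many similar-looking tensor contractions must be tracked and combined correctly, and \emph{exact} (rather than approximate) equality must be maintained throughout. In particular, the temptation to simplify $-\nabla\cdot\xi$ to $\vec{H}_I\cdot\xi$ via \eqref{EquationXiCurvature}, or $\partial_t\xi$ to $-(\vec{H}_I\cdot\nabla)\xi-(\nabla\vec{H}_I)^T\xi$ via \eqref{EquationXiTransport}, must be resisted: keeping those combinations intact in the final expression is precisely what allows the right-hand side to be controlled by $E[u_\eps|I]$ in the subsequent Gronwall estimate.
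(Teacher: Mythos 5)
Your proof is correct and follows essentially the same route as the paper: the starting identity from energy dissipation and integration by parts, the square completion exploiting $|\vec{H}_\eps|^2=(\vec{n}_\eps\cdot\vec{H}_\eps)^2$, the double integration by parts turning $-\int\vec{H}_\eps\cdot\vec{H}_I\,|\nabla u_\eps|\dx$ into the $\nabla\vec{H}_I:\vec{n}_\eps\otimes\vec{n}_\eps$ and $\nabla\cdot\vec{H}_I$ terms, the equipartition splits, the polarization identity, and the Hessian-symmetry integration by parts that produces the $(\nabla\cdot\xi)\vec{H}_I\cdot\vec{n}_\eps$ term are all the same ingredients; the only difference is that you complete squares first and reorganize the calibration terms last, whereas the paper reorganizes first and completes squares last.
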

\begin{proof}
By direct computation, we obtain
\begin{align*}
\frac{d}{dt} E[u_\eps|I] & =\frac{d}{dt} \int_\domain \frac{\eps}{2} |\nabla u_\eps|^2 + \frac{1}{\eps} W(u_\eps) - \xi \cdot \nabla \psi_\eps \dx
\\&
\stackrel{\eqref{EnergyDissipation},\eqref{AllenCahn}}{=}
-\int_\domain \frac{1}{\eps} \bigg|\eps \Delta u_\eps - \frac{1}{\eps} W'(u_\eps)\bigg|^2 \dx
\\&~~~~~~~~
-\int_\domain \nabla \psi_\eps \cdot \frac{d}{dt} \xi \dx
+ \int_\domain \sqrt{2W(u_\eps)} \bigg(\Delta u_\eps - \frac{1}{\eps^2} W'(u_\eps)\bigg) \nabla \cdot \xi \dx.
\end{align*}
With the definitions \eqref{def:n_eps} and \eqref{def:H_eps},
we obtain
\begin{align*}
\frac{d}{dt} E[u_\eps|I]
&=\int_\domain -\frac{1}{\eps} |\vec{H}_\eps|^2  + \vec{n}_\eps \cdot \vec{H}_\eps  \, (-\nabla \cdot \xi) ~ \frac{1}{\eps} \sqrt{2W(u_\eps)}
\dx
\\&~~~~
+\int_\domain \nabla \vec{H}_I : \xi \otimes \vec{n}_\eps |\nabla \psi_\eps| \dx
\\&~~~~
+\int_\domain (\vec{H}_I \cdot \nabla) \xi \, \cdot \nabla \psi_\eps \dx
\\&~~~~
-\int_\domain \nabla \psi_\eps \cdot \bigg(\frac{d}{dt} \xi +  (\vec{H}_I \cdot \nabla) \xi+(\nabla \vec{H}_I)^T \xi\bigg) \dx.
\end{align*}
We exploit the symmetry of the Hessian $\nabla^2 \psi_\eps$
\begin{align*}
&\int_\domain (\vec{H}_I \cdot \nabla) \xi \, \cdot \nabla \psi_\eps \dx
\\&
= -\int_\domain \vec{H}_I \otimes \xi : \nabla^2 \psi_\eps \dx -\int_\domain \nabla \cdot \vec{H}_I ~ \xi \cdot \nabla \psi_\eps \dx
\\&
=\int_\domain (\xi\cdot \nabla) \vec{H}_I \cdot \nabla \psi_\eps \dx+ \int_\domain (\nabla \cdot \xi ~ \vec{H}_I - \nabla \cdot \vec{H}_I ~\xi) \cdot \nabla \psi_\eps \dx,
\end{align*}
which yields 
\begin{align*}
\frac{d}{dt} E[u_\eps|I]
&=\int_\domain -\frac{1}{\eps} |\vec{H}_\eps|^2  + \vec{n}_\eps \cdot \vec{H}_\eps  \, (-\nabla \cdot \xi) ~ \frac{1}{\eps} \sqrt{2W(u_\eps)}
\dx
\\&~~~~
+\int_\domain \nabla \vec{H}_I : \xi \otimes \vec{n}_\eps |\nabla \psi_\eps| \dx
\\&~~~~
+\int_\domain (\xi\cdot \nabla) \vec{H}_I \cdot \vec{n}_\eps |\nabla \psi_\eps| \dx
\\&~~~~
+\int_\domain (\nabla \cdot \xi ~ \vec{H}_I - \nabla \cdot \vec{H}_I ~\xi) \cdot \nabla \psi_\eps \dx
\\&~~~~
-\int_\domain \nabla \psi_\eps \cdot \bigg(\frac{d}{dt} \xi +  (\vec{H}_I \cdot \nabla) \xi+(\nabla \vec{H}_I)^T \xi\bigg) \dx.
\end{align*}
Together with $\xi \otimes \vec{n}_\eps + \vec{n}_\eps \otimes \xi  = - (\vec{n}_\eps -\xi) \otimes (\vec{n}_\eps - \xi) + \vec{n}_\eps \otimes \vec{n}_\eps + \xi \otimes \xi$ the computation \eqref{Auxiliary} below then implies 
\begin{align*}
\frac{d}{dt} E[u_\eps|I]
&=\int_\domain -\frac{1}{\eps} |\vec{H}_\eps|^2 + \vec{H}_\eps \cdot \vec{H}_I \, |\nabla u_\eps| + \vec{n}_\eps \cdot \vec{H}_\eps  \, (-\nabla \cdot \xi) ~ \frac{1}{\eps} \sqrt{2W(u_\eps)}
\dx
\\&~~~~
+\int_\domain \nabla \cdot \vec{H}_I |\nabla \psi_\eps| \dx
\\&~~~~
+\int_\domain \nabla \cdot \vec{H}_I \bigg(\frac{\eps}{2}|\nabla u_\eps|^2
+ \frac{1}{\eps} W(u_\eps) - |\nabla \psi_\eps| \bigg) \dx
\\&~~~~
- \int_\domain \nabla \vec{H}_I : \vec{n}_\eps\otimes \vec{n}_\eps (\eps |\nabla u_\eps|^2 - |\nabla \psi_\eps|) \dx
\\&~~~~
-\int_\domain \nabla \vec{H}_I : (\vec{n}_\eps-\xi) \otimes (\vec{n}_\eps -\xi) |\nabla \psi_\eps| \dx
\\&~~~~
+\int_\domain (\xi\cdot \nabla) \vec{H}_I \cdot \xi  |\nabla \psi_\eps| \dx
\\&~~~~
+\int_\domain (\nabla \cdot \xi ~ \vec{H}_I - \nabla \cdot \vec{H}_I ~\xi) \cdot \nabla \psi_\eps \dx
\\&~~~~
-\int_\domain \nabla \psi_\eps \cdot \bigg(\frac{d}{dt} \xi +  (\vec{H}_I \cdot \nabla) \xi+(\nabla \vec{H}_I)^T \xi\bigg) \dx.
\end{align*}
Completing the squares and adding zero, we obtain \eqref{time_derivative_entropy}.
\end{proof}

\subsection{Auxiliary computation}

In the above computation, we have made use of the formula
\begin{align}
\label{Auxiliary}
& \quad \int_\domain \nabla \vec{H}_I : \vec{n}_\eps \otimes \vec{n}_\eps |\nabla \psi_\eps |\dx
\\
\nonumber
&=\int_\domain  \vec{H}_\eps \cdot \vec{H}_I |\nabla u_\eps| \dx
+\int_\domain \nabla \cdot \vec{H}_I \bigg(\frac{\eps}{2}|\nabla u_\eps|^2
+ \frac{1}{\eps} W(u_\eps) \bigg) \dx
\\&\quad
\nonumber
- \int_\domain \nabla \vec{H}_I : \vec{n}_\eps\otimes \vec{n}_\eps (\eps |\nabla u_\eps|^2 - |\nabla \psi_\eps|) \dx.
\end{align}
Indeed, due to definition \eqref{def:H_eps} we have
\begin{align*}
-\int_\domain  \vec{H}_\eps \cdot \vec{H}_I |\nabla u_\eps| \dx =  \int_\domain \left(\eps \Delta u_\eps - \frac{W'(u_\eps)}{\eps}\right) \vec{H}_I \cdot \nabla u_\eps \dx.
\end{align*}
Using the identity $\sum_{i=1}^d \partial_i \partial_i u_\eps \partial_j u_\eps = \sum_{i=1}^d \left( \partial_i(\partial_i u_\eps \partial_j u_\eps)\right) - \frac{1}{2}\partial_j |\nabla u_\eps |^2$ we calculate
\begin{align*}
	& \quad\int_\domain \left(\eps \Delta u_\eps - \frac{W'(u_\eps)}{\eps}\right) \vec{H}_I \cdot \nabla u_\eps \dx \\
	& = \int_\domain  \sum_{i,j=1}^d \left( \eps \partial_i \partial_i u_\eps \partial_j u_\eps \vec{H}_{I,j}\right) - \frac{1}{\eps} \vec{H}_I \cdot \nabla \left( W( u_\eps)\right)   \dx \\
	& = \int_\domain  \sum_{i,j=1}^d \left( - \eps \partial_i \vec{H}_{I,j}\partial_i u_\eps \partial_j u_\eps \right) +\nabla \cdot \vec{H}_I  \left( \frac{\eps}{2} |\nabla u_\eps|^2 + \frac{W(u_\eps)}{\eps}\right)\dx.
\end{align*}
Recalling the abbreviation $\vec{n}_\eps = \frac{\nabla u_\eps}{|\nabla u_\eps|}$ we get
\begin{align}\label{partial_integration_cross_term}
  \begin{split}
	& \quad - \int_\domain  \vec{H}_\eps \cdot \vec{H}_I |\nabla u_\eps| \dx \\
	& = \int_\domain \nabla \cdot \vec{H}_I  \left( \frac{\eps}{2} |\nabla u_\eps|^2 + \frac{W(u_\eps)}{\eps}\right) - \nabla \vec{H_I} :\left( \vec{n}_\eps \otimes \vec{n}_\eps \right) \eps |\nabla u_\eps |^2 \dx.
  \end{split}
\end{align}
With the goal of replacing the expressions $\frac{\eps}{2} |\nabla u_\eps|^2 + \frac{W(u_\eps)}{\eps}$ and $\eps |\nabla u_\eps|^2$ by $|\nabla \psi_\eps |$ we rewrite the identity \eqref{partial_integration_cross_term} as \eqref{Auxiliary}.

\subsection{Derivation of the Gronwall inequality}

\begin{proof}[Proof of Theorem~\ref{MainTheorem}]
Using the estimates of Lemma~\ref{lemma:control_from_entropy} we can control the terms on the right-hand side of the identity \eqref{time_derivative_entropy}. Using \eqref{EquationXiTransport}, \eqref{EquationXiLength}, and the bound $||\nabla H_I||_{L^\infty}\leq C(I(t))$, the last four lines of \eqref{time_derivative_entropy} may be estimated by
\begin{align*}
C(I(t)) \int_\domain \min\{\dist^2(x,I),1\}  |\nabla \psi_\eps| + |\vec{n}_\eps-\xi|^2 |\nabla \psi_\eps| + (1-\vec{n}_\eps\cdot\xi) |\nabla \psi_\eps| \dx,
\end{align*}
which by \eqref{control_b} and \eqref{control_d} is bounded by $C(I(t)) E[u_\eps|I]$.

The third line on the right-hand side of \eqref{time_derivative_entropy} can be estimated as
\begin{align}
	\int_\domain \left| \nabla \cdot \vec{H}_I \bigg(\frac{\eps}{2}|\nabla u_\eps|^2
+ \frac{1}{2\eps} W(u_\eps) - |\nabla \psi_\eps| \bigg) \right| \dx \leq \| \nabla \cdot \vec{H}_I \|_\infty E[u_\eps | I ].
\end{align}
Thus, it only remains to estimate the second and the fourth term on the right-hand side of \eqref{time_derivative_entropy}.

Concerning the fourth term, we use the fact that $(\xi\cdot \nabla) \vec{H}_I \equiv 0$ holds in a neighborhood of $I(t)$, Young's inequality, and \eqref{DefPsiEps} to deduce 
\begin{align*}
& \int_\domain \left| \nabla \vec{H}_I : \vec{n}_\eps\otimes \vec{n}_\eps (\eps |\nabla u_\eps|^2 - |\nabla \psi_\eps|) \right|\dx
\\
& = \int_\domain \left| \nabla \vec{H}_I : \vec{n}_\eps \otimes (\vec{n}_\eps - \xi ) \left(\eps |\nabla u_\eps |^2 - |\nabla \psi_\eps| \right)\right| \dx
\\&~~~~
+C\int_\domain \min\{\dist^2(x,I),1\} \big(\eps |\nabla u_\eps |^2 +|\nabla \psi_\eps| \big) \dx
\\
	&  \leq \|\nabla \vec{H}_I \|_\infty \left( \int_\domain |\vec{n}_\eps -\xi |^2 \eps |\nabla u_\eps|^2\, dx \right)^\frac{1}{2} \left( \int_\domain \left(\sqrt{\eps}|\nabla u_\eps| - \frac{1}{\sqrt{\eps}} \sqrt{2W(u_\eps)} \right)^2  \dx \right)^\frac{1}{2}
\\&~~~~
+C\int_\domain \min\{\dist^2(x,I),1\} \big(\eps |\nabla u_\eps |^2 +|\nabla \psi_\eps|\big) \dx.
\end{align*}
Consequently, Lemma \ref{lemma:control_from_entropy} implies that the fourth line on the right-hand side of \eqref{time_derivative_entropy} is bounded by $C E[u_\eps | I]$.

It only remains to bound the term in the second line of the right-hand side of \eqref{time_derivative_entropy}. To this aim, we complete the square and estimate
\begin{align*}
&\int_\domain |\vec{H}_I|^2 \frac{\eps}{2} |\nabla u_\eps|^2 + |\nabla \cdot \xi|^2 \frac{1}{\varepsilon} W(u_\eps) +\vec{H}_I \cdot \vec{n}_\eps \nabla \cdot \xi  |\nabla \psi_\eps| \dx
\\&
=\int_\domain
\frac{1}{2}\bigg|\sqrt{\eps} |\nabla u_\eps| \vec{H}_I+\frac{1}{\sqrt{\eps}} \nabla \cdot \xi \sqrt{2W(u_\eps)} \vec{n}_\eps \bigg|^2
\dx
\\&
\leq
\frac{3}{2}
\int_\domain
\bigg|(\nabla \cdot \xi) \vec{n}_\eps \Big(\sqrt{\eps} |\nabla u_\eps| - \frac{1}{\sqrt{\eps}} \sqrt{2W(u_\eps)}\Big) \bigg|^2
\dx
\\&~~~~
+\frac{3}{2}
\int_\domain
\bigg|(\nabla \cdot \xi)(\xi-\vec{n}_\eps) \sqrt{\eps} |\nabla u_\eps| \bigg|^2
\dx
\\&~~~~
+\frac{3}{2}
\int_\domain
\bigg|\big(\vec{H}_I + (\nabla \cdot \xi) \xi\big) \sqrt{\eps} |\nabla u_\eps| \bigg|^2
\dx.
\end{align*}
Inserting the estimates \eqref{EquationXiCurvature} and \eqref{EquationXiCurvature2} and using the fact that $H_I= (H_I\cdot \xi) \xi+O(\dist(x,I))$, we obtain
\begin{align*}
&\int_\domain |\vec{H}_I|^2 \frac{\eps}{2} |\nabla u_\eps|^2 + |\nabla \cdot \xi|^2 \frac{1}{\varepsilon} W(u_\eps) +\vec{H}_I \cdot \vec{n}_\eps \nabla \cdot \xi |\nabla \psi_\eps| \dx
\\&
\leq C \int_\domain \bigg|\sqrt{\eps} |\nabla u_\eps| - \frac{1}{\sqrt{\eps}} \sqrt{2W(u_\eps)} \bigg|^2 \dx
\\&~~~
+C \int_\domain |\vec{n}_\eps-\xi|^2 \eps |\nabla u_\eps|^2+ \min\{\dist^2(x,I),1\} \eps|\nabla u_\eps|^2 \dx.
\end{align*}
By Lemma~\ref{lemma:control_from_entropy}, we see that these terms are estimated by $CE[u_\eps|I]$.
\end{proof}

\pagebreak
\section{Estimate for the Interface Error}

We now derive the interface error estimate of Theorem~\ref{Theorem1}.
\begin{proof}[Proof of Theorem~\ref{Theorem1}]
\emph{Step 1: Estimate for the relative entropy.} In view of Theorem~\ref{MainTheorem}, in order to prove
\begin{align}
\label{InterfaceErrorEstimateEnergy}
&\sup_{t\in [0,T]} E[u_\eps|I] + \int_0^T \int_\domain \frac{1}{\eps} \big|\vec{H}_\eps - \vec{H}_I  \, \varepsilon |\nabla u_\eps|\big|^2 + \frac{1}{\eps} \big|\vec{n}_\eps \cdot \vec{H}_\eps  - (-\nabla \cdot \xi) \sqrt{2W(u_\eps)} \big|^2 \dx \dt
\\&~~~~~~~~~
\nonumber
\leq C(d,T,(I(t))_{t\in [0,T]}) \eps^2
\end{align}
it only remains to show that the initial relative entropy satisfies $E[u_\eps|I](0)\leq C(d,I(0)) \eps^2$. To this end, we compute using $u_\eps(x,0)=\theta(\eps^{-1}\sdist(x,I(0)))$ and the fact that $\nabla \sdist(x,I(0))\cdot \xi = |\nabla \sdist(x,I(0))| |\xi|\geq |\xi|^2$
\begin{align*}
E[u_\eps|I](0)
\leq&
\int_\domain
\frac{|\xi|^2}{2\eps} |\theta'(\eps^{-1}\sdist(x,I(0)))|^2
+\frac{|\xi|^2}{\eps}W(\theta(\eps^{-1}\dist^\pm(x,I(0)))
\\&~~~~
-\frac{1}{\eps}
\sqrt{2W(\theta(\eps^{-1}\dist^\pm(x,I(0)))} \, \theta'(\eps^{-1}\sdist(x,I(0))) |\xi|^2
\dx
\\&
+\int_\domain \frac{1}{\eps} (1-|\xi|^2) \Big( \frac12|\theta'(\eps^{-1}\sdist(x,I(0)))|^2+W(\theta(\eps^{-1}\sdist(x,I(0))))\Big) \dx.
\end{align*}
Using the defining equation $\theta'(s)= \sqrt{2W(\theta(s))}$ as well as the fact that $|\theta'(s)|$ decays exponentially in $s$ and that $|\xi|^2\geq 1-c\dist^2(x,I)$, we deduce $E[u_\eps|\chi](0)\leq C(d,I(0)) \eps^2$.

~\\
\noindent
\emph{Step 2: Interface error estimate.}
We now perform an additional computation to obtain a more explicit control on the interface error.
We may write
\begin{align*}
\partial_t \psi_\eps = \sqrt{2W(u_\eps)} \partial_t u_\eps \stackrel{\eqref{AllenCahn},\eqref{def:H_eps}}{=} -\eps^{-1} \sqrt{2 W(u_\eps)} \vec{H}_\eps \cdot \vec{n}_\eps.
\end{align*}
We choose $\tau:\mathbb{R} \rightarrow [-1,1]$ to be a smooth monotone truncation of the identity map (with $\tau(s) \geq \min\{s,\frac{1}{2}\}$ for $s>0$, $\tau(s) \leq \max\{s,-\frac{1}{2}\}$ for $s<0$ and $\tau(s)=\operatorname{sign}(s)$ for $|s|\geq 1$). Fixing $s_0>0$ to be determined later and observing that the measure-function pairing $(\frac{d}{dt}\chi) \tau\big(\frac1{s_0}\sdist(x,I)\big)$ vanishes, we obtain
\begin{align*}
&\frac{d}{dt} \int_\domain (\chi-\psi_\eps) \tau\Big(\frac1{s_0}\sdist(x,I)\Big) \dx
\\&
=\int_\domain \eps^{-1}  \sqrt{2 W(u_\eps)} \vec{H}_\eps \cdot \vec{n}_\eps \tau\Big(\frac1{s_0}\sdist(x,I)\Big) \dx
\\&~~~~
+ \int_\domain (\chi-\psi_\eps) \frac1{s_0}\tau'\Big(\frac1{s_0}\sdist(x,I)\Big) \partial_t \sdist(x,I) \dx
\\&
=\int_\domain \eps^{-1} \sqrt{2 W(u_\eps)} \vec{H}_\eps \cdot \vec{n}_\eps \tau\Big(\frac1{s_0}\sdist(x,I)\Big) \dx
\\&~~~~
- \int_\domain (\chi-\psi_\eps) \vec{H}_I \cdot \nabla \Big(\tau\Big(\frac1{s_0}\sdist(x,I)\Big)\Big) \dx
\\&~~~~
+ \int_\domain (\chi-\psi_\eps) \frac1{s_0}\tau'\Big(\frac1{s_0}\sdist(x,I)\Big) \big(\partial_t \sdist(x,I)+\vec{H}_I\cdot \nabla \sdist(x,I)\big) \dx
\\&
=\int_\domain (\eps^{-1} \sqrt{2 W(u_\eps)} \vec{H}_\eps \cdot \vec{n}_\eps- \nabla \psi_\eps \cdot \vec{H}_I) \tau\Big(\frac1{s_0}\sdist(x,I)\Big) \dx
\\&~~~~
+ \int_\domain (\chi-\psi_\eps)  \tau\Big(\frac1{s_0}\sdist(x,I)\Big)\nabla \cdot \vec{H}_I \dx
\\&~~~~
+ \int_\domain (\chi-\psi_\eps) \frac1{s_0} \tau'\Big(\frac1{s_0}\sdist(x,I)\Big) \big(\partial_t \sdist(x,I)+H_I\cdot \nabla \sdist(x,I)\big) \dx
\end{align*}
where in the last step we have used integration by parts and $\tau(\sdist(x,I(t)) = 0$ on $\supp \nabla \chi(\,\cdot\,,t)$.

This may be rewritten using the definition of $\psi_\eps$ and $\vec{n}_\eps$ as
\begin{align*}
&\frac{d}{dt} \int_\domain (\chi-\psi_\eps) \tau\Big(\frac1{s_0}\sdist(x,I)\Big) \dx
\\&
=\int_\domain \eps^{-1} \sqrt{2 W(u_\eps)} (\vec{H}_\eps -\vec{H}_I \eps |\nabla u_\eps|)\cdot \vec{n}_\eps \tau\Big(\frac1{s_0}\sdist(x,I)\Big) \dx
\\&~~~
+\int_\domain (\chi-\psi_\eps)  \tau\Big(\frac1{s_0}\sdist(x,I)\Big) \nabla \cdot \vec{H}_I \dx
\\&~~~
+ \int_\domain (\chi-\psi_\eps) \frac1{s_0}\tau'\Big(\frac1{s_0}\sdist(x,I)\Big) \big(\partial_t \sdist(x,I)+\vec{H}_I\cdot \nabla \sdist(x,I)\big) \dx.
\end{align*}
Since $\partial_t \sdist(x,I)=-\vec{H}_I \cdot \nabla \sdist(x,I)$ holds in a neighborhood of the interface, the last integral vanishes identically if we choose $s_0>0$ sufficiently small, e.g., $s_0= \frac{r_c}{4}$. Using Cauchy-Schwarz we deduce
\begin{align*}
&\frac{d}{dt} \int_\domain (\chi-\psi_\eps) \tau\Big(\frac1{s_0}\sdist(x,I)\Big) \dx
\\&
\leq \int_\domain \eps^{-1} \big|\vec{H}_\eps - \vec{H}_I \eps |\nabla u_\eps|\big|^2 \dx
+\int_\domain \eps^{-1} 2W(u_\eps) \Big|\tau\Big(\frac1{s_0}\sdist(x,I)\Big)\Big|^2 \dx
\\&~~~
+||(\nabla \cdot \vec{H}_I)_+||_{L^\infty} \int_\domain |\psi_\eps-\chi| \Big|\tau\Big(\frac1{s_0}\sdist(x,I)\Big)\Big| \dx,
\end{align*}
where $(\nabla \cdot \vec{H}_I)_+$ denotes the positive part of $\nabla \cdot \vec{H}_I$. In order to be able to apply the Gronwall inequality, we note that $\psi_\eps \in [-1,1]$. The most natural way of ensuring this is by using the maximum principle, although also a purely energetic proof by means of the minimizing movements scheme and a truncation argument is available.
By the Gronwall inequality, \eqref{InterfaceErrorEstimateEnergy}, \eqref{control_d}, this shows that
\begin{align}
\label{GronwallBoundFull}
\sup_{t\in [0,T]} \int_\domain |\psi_\eps-\chi| \min\{\dist(x,I),1\} \dx \leq C(d,T,(I(t))_{t\in [0,T]}) \eps^2.
\end{align}

In order to pass to an unweighted norm we use the following elementary estimate for $f\in L^\infty(0,\frac{r_c}{2})$:
Applying Fubini's theorem after splitting the square $[0,\frac{r_c}{2}]^2$ into two isoceles triangles yields 
\begin{align*}
& \bigg(\int_{0}^\frac{r_c}{2} |f(y)| \dy\bigg)^2
\leq 2 \| f\|_\infty
\int_{0}^\frac{r_c}{2} |f(y)| y \dy.
\end{align*}
This allows to estimate for the $\frac{r_c}{2}$-neighborhood of $I(t)$
\begin{align*}
&\bigg(\int_{I(t)+B_\frac{r_c}{2}} |\psi_\eps(x,t)-\chi(x,t)| \dx\bigg)^2
\\&
\leq C(d,I(t))
\bigg(\int_{I(t)} \int_{0}^\frac{r_c}{2} |\psi_\eps(w+y\vec{n}_I(w),t)-\chi(w+y\vec{n}_I(w),t)| \dy
\\&~~~~~~~~~~~~~~~~~~~~~~~~~~
+ \int_{0}^\frac{r_c}{2} |\psi_\eps(w-y\vec{n}_I(w),t)-\chi(w-y\vec{n}_I(w),t)| \dy \dS(w)\bigg)^2
\\&
\leq C(d,I(t))
\int_{I(t)} \int_{-\frac{r_c}{2}}^{\frac{r_c}{2}} |\psi_\eps(w+y\vec{n}_I(w),t)-\chi(w+y\vec{n}_I(w),t)|
\\&~~~~~~~~~~~~~~~~~~~~~~~~~~~~~~~~~~~~~~
\times
\dist(w+y\vec{n}_I(w),I(t)) \dy \dS(w)
\\&
\leq C(d,I(t))
\int_{I(t)+B_{\frac{r_c}{2}}} |\psi_\eps(x,t)-\chi(x,t)| \dist(x,I) \dx 
,
\end{align*}
which in view of \eqref{GronwallBoundFull} yields Theorem~\ref{Theorem1}.
\end{proof}

\bibliographystyle{abbrv}
\bibliography{allencahn}

\end{document}